\documentclass[american]{amsart}
\usepackage[T1]{fontenc}
\usepackage[latin1]{inputenc}
\setcounter{secnumdepth}{1}
\setcounter{tocdepth}{1}
\usepackage{amssymb}
\usepackage{babel}
\usepackage[mathscr]{eucal}
\usepackage{todonotes}
\makeatletter
\makeatother



 \theoremstyle{plain}
 \newtheorem{cor}{Corollary}

\theoremstyle{plain}
 
 \theoremstyle{plain}
 \newtheorem{prop}{Proposition}
 \theoremstyle{plain}
 
\theoremstyle{plain}
 \newtheorem*{thm*}{Theorem}

\theoremstyle{definition}

\newtheorem*{question}{Question}

\theoremstyle{remark}

\newtheorem*{acknowledgement*}{Acknowledgement}


\newcommand{\RR}{\mathbb{R}}\newcommand{\QQ}{\mathbb{Q}}\newcommand{\EE}{\mathbb{E}}
\newcommand{\PP}{\mathbb{P}}
\newcommand{\NN}{\mathbb{N}}
\newcommand{\ZZ}{\mathbb{Z}}

\newcommand{\Qp}{\QQ_{p}}

\newcommand{\affR}{\mathrm{Aff}(\RR)}\newcommand{\affO}[1]{\mathrm{Aff}(#1 )}
\newcommand{\affQ}{\mathrm{Aff}(\QQ)}

\newcommand{\inv}{^{-1}}

\newcommand{\bnd}{\mathbf{bnd}}

\newcommand{\act}{\star}
\newcommand{\bigslant}[2]{{\raisebox{.2em}{$#1$}\left/\raisebox{-.2em}{$#2$}\right.}}

\begin{document}

\begin{flushright}\today\end{flushright}

\title[A construction of the measurable Poisson boundary]{A construction of the measurable Poisson boundary: from discrete to continuous groups}
\author{Sara Brofferio}
\address{S. Brofferio\\ Universit\'e Paris-Sud\\
Laboratoire de Math\'ematiques et IUT de Sceaux \\
91405 Orsay Cedex\\ France }
\email{sara.brofferio@u-psud.fr}

\begin{abstract}
Let $\Gamma$ be a dense countable subgroup of a locally compact continuous group $G$. Take a probability measure $\mu$ on $\Gamma$. There are two natural spaces of harmonic functions: the space of $\mu$-harmonic functions on the countable group $\Gamma$ and the space of $\mu$-harmonic functions seen as functions on $G$ defined a.s. with respect to its Haar measure $\lambda$.  This leads to two natural Poisson boundaries: the $\Gamma$-Poisson boundary and the $G$-Poisson boundary. Since boundaries on the countable group are quite well understood, a natural question is to ask how $G$-boundary is related to the $\Gamma$-boundary.

In this paper we present a theoretical setting to build the $G$-Poisson boundary from the $\Gamma$-boundary. We apply this technics to build the Poisson boundary of the closure of the Baumslag-Solitar group in the group of real matrices. In particular we show that, under moment condition and in the case that the action on $\RR$ is not contracting, this boundary  is the $p$-solenoid.
\end{abstract}

\maketitle

An important field in the studies of random walks on groups investigates harmonic functions, that is, given a measure $\mu$ on a group $\Gamma$, describe the functions $f$ on the group such that
\begin{equation}\label{eq:fonct-harm-int}
f(g)=\int_\Gamma f(g\gamma)d\mu(\gamma).
\end{equation}
The Poisson Boundary is, in this setting, the measurable space that gives the integral representation of all bounded harmonic functions. This spaces can also be interpreted as the asymptotic information contained in paths of the random walk of law $\mu$. A natural question is to determine when this space is trivial and, if it is not,  to identify  a  geometrical model.

After the works of Blakwell, Choquet and Deny on abelian groups and the seminal papers of Furstenberg in the sixties, many progress have been made. In particular when the harmonic functions live on a \emph{countable discrete group} $\Gamma$, a complete theory has been developed from the works of Derriennic \cite{Der}, Kaimanovich and Vershik \cite{KV} that allows to construct the Poisson Boundary (or at least decide whether it is  trivial) for large classes of groups.

In the more general cases where the measure $\mu$ is supported by a locally compact group $G$, the situation is more complex and one has to decide on which space  harmonic functions live. A natural choice is to consider  harmonic functions as a sub-space of  $L^\infty (G, \lambda)$, the space of essentially bounded functions with respect to the Haar measure $\lambda$ of the group. If the measure $\mu$ is spread-out (thus well adapted to the continuous structure)  satisfying general results have been obtained for Lie group.
 The more general case, when the measure $\mu$ is not necessarily smooth, is far to be completely understood. Some results have been obtained for particular classes of groups (e.g. Nilpotent groups \cite{Gui73, Bre}, NA groups \cite{Rau}..). Abstract constructions have been also proposed, but they do not  allow in general to construct geometrical model for the boundary nor to verify if it is trivial. I refer to the survey of M.Babillot \cite{Bab} for a precise and complete presentation on the subject and a more detailed bibliography.

The opposite case to $\mu$ being smooth  arises when the measure $\mu$ is purely atomic and, thus, supported by a countable subgroup $\Gamma$, that we can suppose dense in the continuous group $G$. In this situation, harmonic functions can be seen both as functions on the discrete group $\Gamma$ and as measurable functions on the continuous group $(G,\lambda)$.

 When the Poisson boundary of the discrete group $(\Gamma, \mu)$ is known (that is we can describe $\Gamma$-harmonic functions), several natural questions concerning $G$-measurable harmonic functions  arise:
 \begin{itemize}
\item Which $\Gamma$-harmonic function can be extended to $G$-harmonic function?
\item How are related the $\Gamma$-Poisson Boundary and the $G$-Poisson Boundary?
\item If we know how $G$ acts on  the $\Gamma$-measurable Poisson Boundary, is it possible to determine conditions that imply that there are no $G$-harmonic function?
\end{itemize}

The goal of this manuscript is to investigate these questions. We are in particularly interested in the case of the groups of matrices with rational entries seen as subgroups of real matrices. In this case the Poisson boundaries of the countable groups are well understood (\cite{BS}), while there still a lot of open questions concerning the Poisson boundaries of the corresponding real groups (see section \ref{sect: exmeples} for more detailed examples).

In section \ref{sect: construction G-bound},  we give a general construction of the $G$-Poisson boundary as $\Gamma$-ergodic components in the product of $G$ and  the $\Gamma$-boundary (Proposition \ref{prop:GammatoG-PB}). We use this construction to exhibit the real boundary in the case of the Baumslag Solitar group $BS(1,p)$ seen as a dense subgroup  of $$\left\{\left[
                                     \begin{array}{cc}
                                       p^{m} & b \\
                                       0 & 1 \\
                                     \end{array}
\right]| m\in\ZZ, b\in\RR\right\}= \RR\rtimes\ZZ.$$
In particular, if $\mu$ is dilating on $\RR$ it is known that the  $BS(1,p)$- Poisson boundary is the $p$-adic field $\QQ_p$ (thus there is  no "real" competent in the boundary), however the real Poisson boundary is not trivial and is given by the $p$-solenoid
$$[0,1)\times \ZZ_p=\bigslant{(\RR\times \QQ_p)}{\ZZ}$$
where the action of $\ZZ$ on $\RR\times \QQ_p$ is the diagonal action (Corollary \ref{cor-PBofBS}).

\begin{acknowledgement*}
  I would like to thanks Vadim Kaimanovich, Jean-Fran\c cois Quint and Bertrand Deroin for the illuminating discussions.

  A spacial thanks to Wolfgang Woess for his continuous and kind support during all my mathematical carrier.
\end{acknowledgement*}

\section{$G$-harmonic functions and $G$-Poisson boundary}
We present in this section a brief introduction to measurable Poisson boundary following the notation of Babillot \cite{Bab} and Kaimanovich \cite{Kai00}
\subsection{$G$-harmonic functions}
Let $G$ be locally compact second countable (thus metrizable and  complete) group.
Let $\mathfrak{G}$ be the Borel $\sigma$-algebra of $G$ and $\lambda$ the right Haar measure.

Let $\mu$ be a probability on $G$ such that the closed semigroup generated by its support is the whole group $G$.

We say that a function $f\in L^\infty(G,\lambda)$ is \emph{$\mu$-harmonic on} $(G,\lambda)$ (or \emph{$G$-harmonic}) if
$$f(g)=\int_\Gamma f(g\gamma)d\mu(\gamma)\qquad \mbox{ for $\lambda$-almost all } g\in G.$$
We denote by $H^\infty_{\lambda}(G)$ the subspace $G$-harmonic functions in $L^\infty(G,\lambda)$.

It can be shown, convolving on the left by an identity approximation on $G$, that any $f\in H^\infty_\lambda(G)$ is $\lambda$-almost sure limit of harmonic functions that are  left uniformly continuous on $G$.  In this sense the space of $G$-harmonic functions is determined by the behavior of continuous ones. In particular if all continuous harmonic functions are constant then $H^\infty_\lambda(G)$ is trivial. We denote by $H_{\mathrm{luc}}^\infty(\Gamma)$ the space of left uniformly continuous  $G$-harmonic functions.

\subsection{Random walks and invariant map}
Harmonic functions can be seen as asymptotic values of random walk in the following way.
Let $(\Omega,\PP)=(G,\mu)^\NN, $ be the space of random steps  and consider the right random walk
$$r_n(\omega)=\omega_1\cdots\omega_n.$$
Let $f$ be a bounded $G$-harmonic function. Remarks that since the function $f$ is defined only $\lambda$-almost surely, the process $f(gr_n(\omega))$ is well defined only for $\lambda$-almost all $g$. For this reason one need to chose the starting point $g$ according to $\rho$, a probability law on $G$ absolutely continuous with respect to $\lambda$. On the space $(G\times\Omega,\rho\times\PP)$ then  the random process $f(gr_n(\omega))$ is well defined and, since  $f$ is harmonic, it is a bounded martingale. Thus the limit
\begin{equation}\label{eq:f->Z}
\lim_{n\to \infty} f(gr_n(\omega))=: Z_f(g,\omega)\mbox{ exists $\rho(dg)\PP(d\omega)$-almost surely. }
\end{equation}

Let $T$ be the shift on $\Omega$ then is easily checked that
$$Z_f(g,\omega)=Z_f(g\omega_1,T\omega) \qquad \mbox{$\rho(dg)\PP(d\omega)$-almost surely}$$
that is $Z_f$ is  a bounded measurable \emph{invariant map} of $G\times \Omega$. In fact (\ref{eq:f->Z}) defines an isometry of $H^\infty_\lambda(G)$ onto the subspace of measurable invariant map in $L^\infty(G\times\omega,\rho\times\PP)$, whose the reverse map is given by
$$f_Z(g):=\EE(Z(g,\omega)) \qquad\mbox{ $\rho(dg)$-almost surely}.$$

\subsection{Poisson transform and $G$-Poisson boundary}
 Take a  measurable space $(X,\mathfrak{X}, \nu)$  endowed with a measurable  $G$-action and  a $\mu$-invariant probability measure $\nu$. The \emph{Poisson transform}
$$\mathcal{P}_\nu: \phi \mapsto f_\phi(g):=\int\phi(g \cdot  x)d\nu(x)$$
maps  any bounded function $\phi$ in $L^\infty(X,\rho*\nu)$ to a $\mu$-harmonic function $f_\phi$ of  $H_\lambda^\infty(\Gamma)$. \\
Observe that the Poisson transform would not be well defined as map of $L^\infty(X,\nu)$. In fact, since $\nu$ is not in general $G$-quasi invariant (i.e. $g*\nu$ is not in general absolutely continuous with respect to $\nu$), two functions that coincide $\nu$-a.s. can have different images.

If the Poisson transform is an \textbf{isometry} of $L^\infty(X,\rho*\nu)$ \textbf{onto} $H_\lambda^\infty(G)$ then $(X,\nu)$ is called  \emph{$(G,\mu)$-Poisson boundary}. It can be shown that the Poisson boundary is unique as a $G$-measurable space.

If $X$ is the $G$-Poisson boundary then there exists a measurable \emph{boundary map}
$\mathbf{bnd}:\Omega\to X$ such that for every harmonic function $f\in H^\infty_{\lambda}(G)$ there exits $\phi_f\in L^\infty(X,\rho*\nu)$ such that
$$\phi_f(g\cdot \mathbf{bnd}(\omega))=\lim_{n\to \infty} f(gr_n(\omega)) \qquad \rho(dg)\PP(d\omega)-\mbox{a.s.}.$$
Thus
\begin{eqnarray*}
  f(g) &=& \int\phi_f(g \cdot  x)d\nu(x) \qquad \rho(dg)-\mbox{a.s.} \qquad \mbox{and}\\
  Z_f(g,\omega) &=& \phi_f(g\cdot \mathbf{bnd}(\omega)) \qquad \rho(dg)\PP(d\omega)-\mbox{a.s.}.
\end{eqnarray*}

The $\mu$-invariant measure $\nu$ on $X$ is then the image of $\PP$ under $\mathbf{bnd}$. The boundary map is $G$-equivariant in the sense that
$\mathbf{bnd}(\omega)=\omega_1\cdot\mathbf{bnd}(T\omega)$.

\subsection{Countable group $\Gamma$ }
   A special case arise when the group  $G=\Gamma$ is countable. The Haar measure $\lambda$ is then the counting measure and one can chose $\rho$ to have a mass in all elements $g\in \Gamma$. This mean that all the equalities above hold for all $g\in \Gamma$.

   In this particular case (and under the hypothesis that the support of $\mu$ generate $\Gamma$ as a semigroup) the stationary measure  $\nu$ on $X$ is $\Gamma$-quasi invariant  and $\mathcal{P}_\nu$ is well defined on $L^\infty(X,\nu)$  itself.

    The fact that that $\mu$ is absolutely continuous with respect to $\lambda_\Gamma$ is also  fundamental on the study of Poisson boundary based on entropy \cite{Der,KV}. This complete theory has permitted to determine a geometrical model of the Poisson boundary.

\subsection{Countable subgroup $\Gamma$ of a continuous $G$}
In this note we are interested on the case when the measure $\mu$ is supported on countable subgroup $\Gamma$ of a continuous group $G$ and in particular when  $\Gamma$ is dense in $G$. Then  a \emph{continuous} harmonic function $f$ on $G$ is uniquely determined by the values  $f(\gamma)$ for $\gamma\in\Gamma$. Thus $f$ can also be seen as a $\Gamma$-harmonic function. In other words, the restriction to $\Gamma$ is an isometric embedding of $H_{\mathrm{luc}}^\infty(G)$ into $H_{\lambda}^\infty(\Gamma)$. In particular if $(X,\nu)$ is the $\Gamma$-Poisson boundary then there exists $\phi$ in  $L^\infty(X,\nu)$ such that
 $$ f(\gamma)=\int_X \phi (\gamma\cdot x)\qquad\forall \gamma\in \Gamma.$$
 However this integral representation do not hold in general for $f(g)$ when $g$ is not in $\Gamma$, since $X$ is not a priori a $G$-space.

 In conclusion the $\Gamma$-Poisson boundary contains in principle all the information about the $G$-Poisson boundary. But in order to obtain this information one should be able to answer to two related questions:
 \begin{itemize}
   \item Determine the $G$-action on (an extension of) $X$  adapted to the action of $G$ on $H_{\lambda}^\infty(G)$
   \item Determine which are the functions in $L^\infty(X,\nu)$ whose Poisson transform  can be extended to $G$.
 \end{itemize}

\subsection{Examples: Linear groups with rational coefficients}\label{sect: exmeples}
We are in particular interested in the case where the $\Gamma$-Poisson boundary is known, but the understanding of $G$-harmonic functions are not completely understood. Here some examples.

\subsubsection{Affine groups}
The real affine group $\affR$ is the group of real map $(b,a):x\mapsto ax+b$ with $a\in\RR^*_+$ and $b\in \RR$ that is the group of matrices
$$\affR =\left\{ \left[
                                     \begin{array}{cc}
                                     a & b \\
                                       0 & 1 \\
                                     \end{array}
\right]| a\in\RR^*_+,  b\in \RR \right\}=\RR \rtimes \RR^*_+.$$

 Harmonic functions on  $\affR$ have been widely studied and some results are known also without continuous hypothesis on the measure $\mu$. In particular under $\log$-moment hypothesis
$$\EE(|\log a|)<\infty\mbox{ and }\EE(\log^+ b)<\infty$$
it is known that:
\begin{itemize}
  \item If $\EE(\log a)=0$ the $\affR$-Poisson Boundary is trivial (\cite{Rau2}, see also \cite[sect.4.5]{Bab})
  \item If $\EE(\log a)<0$ the $\affR$-Poisson Boundary is $\RR$ with the $\mu$-invariant measure $\nu$ given by the law of
      \begin{equation}\label{eq-Zinf}
      Z_\infty=\sum_{n=1}^\infty a_1\cdots a_{n-1}b_n
      \end{equation}(\cite{Rau}, see also \cite[thm 5.7]{Bab})
\end{itemize}
If $\EE(\log a)>0$ and the measure is spread-out then $\affR$-Poisson Boundary is trivial. But the question is still open on what happen in general if  $\EE(\log a)>0$, in particular when the measure $\mu$ is supported by a countable subgroup $\Gamma$.

On the other hand, using entropic criteria, the $\Gamma$-Poisson boundaries are well understood.   If $\Gamma=\affQ$, the group of affine map with rational coefficient and under suitable moment conditions, the $\affQ$-Poisson boundary is given by the product of the $p$-adic fields $\QQ_p$ where the sum (\ref{eq-Zinf}) converges a.s., that is
$$\prod_{p: \EE(\log |a|_p)<0} \QQ_p,$$
where we use the convention that $\QQ_\infty=\RR $ (see \cite{Bro}).

This property was first proved by V.Kaimanovich \cite{Kai00} in the case of the Baumslag-Solitar group that is $BS(1,p)$, i.e.
$$BS(1,p)=\left\langle \left[
                                     \begin{array}{cc}
                                       p^{\pm1} & \pm1 \\
                                       0 & 1 \\
                                     \end{array}
\right] \right\rangle=\left\{ \left[
                                     \begin{array}{cc}
                                     p^m & qp^n \\
                                       0 & 1 \\
                                     \end{array}
\right]| m,n\mbox{ et }q\in\ZZ\right\}=\ZZ(\frac1p) \rtimes \ZZ.$$
for some prime $p$. In this particular case the $BS(1,p)$-Poisson boundary is $\RR$ if $\EE(\log a)=-\EE(\log|a|_p)<0$ and $\QQ_p$ if $\EE(\log|a|_p)=-\EE(\log a)<0$.

 A natural question is then what are the harmonic functions that can be extended to (continuous) harmonic functions of the closure of $BS(1,p)$ in $\affR$, that is
to
$$\affO{p,\RR} =\left\{ \left[
                                     \begin{array}{cc}
                                     p^m & b \\
                                       0 & 1 \\
                                     \end{array}
\right]| m\in\ZZ, b\in \RR \right\}=\RR \rtimes \ZZ.$$

It turn out that even if $BS(1,p)$-Poisson boundary is $\QQ_p$, the real Poisson boundary is not trivial. In Corollary \ref{cor-PBofBS} we will construct the $\affO{p,\RR}$-Poisson boundary as a $p$-solenoid.

J.-F. Quint presented in the unpublished manuscript \cite{Qui} a similar example of dynamical system acting in non contacting way on the torus and constructed   harmonic functions on the unstable variety.

As we will see in Corollary \ref{cor-fD} this kind of construction is possible since the action of $BS(1,p)$  on  $\affO{p,\RR}\times \QQ_p$ has a discrete orbit.
It is still not clear to me what may happen when the action of $\Gamma$ on the product of $G$ and the $\Gamma$-Poisson boundary is dense.
\begin{question}
 For instance, let $\affO{1/2,1/3}$ be the countable subgroup generated by the affinities
 $$\left\langle \left[
                                     \begin{array}{cc}
                                       3^{\pm1} & \pm1 \\
                                       0 & 1 \\
                                     \end{array}
\right],
\left[
                                     \begin{array}{cc}
                                       2^{\pm1} & \pm1 \\
                                       0 & 1 \\
                                     \end{array}
\right]
 \right\rangle=\left\{ \left[
                                     \begin{array}{cc}
                                     2^{m_2}3^{m_3} & q2^{n_2}3^{n_3} \\
                                       0 & 1 \\
                                     \end{array}
                                     \right]| m_i,n_i\mbox{ et } q\in\ZZ\right\}. $$
 Suppose  $\EE(\log |a|_\infty)>0$, thus  $\Gamma$-Poisson boundary is  equal to $\QQ_2$, $\QQ_3$ or $\QQ_2\times\QQ_3$ (according to the sign of $\EE(\log |a|_2)$ and $\EE(\log |a|_3)$ )  and has no real component.  Is then the $\affR$-Poisson boundary trivial?
\end{question}

\subsubsection{Semi-simple groups}
Similar questions arise in semi-simple situations. Take, for instance, a measure $\mu$ supported by $SL_2(\QQ)$. Then the $SL_2(\QQ)$-boundary is the product the $\QQ_p$-projective lines for all prime $p$  such that the support of $\mu$ is not contained in a compact subgroup of $SL_2(\QQ_p)$ (see \cite{BS}).  In particular for
$$\Gamma=SL_2(\ZZ(1/2))=\left\{  \left[
                                     \begin{array}{cc}
                                     a & b \\
                                       c & d \\
                                     \end{array}
\right] | ad-cd=1, a,b,c\mbox{ et }d\in \ZZ/2^m \mbox{ for some } m\in\ZZ  \right\}$$
the $\Gamma$-Poisson boundary is $\PP^1(\RR)\times\PP^1(\QQ_2)$. It would seem natural to say that the $SL_2(\RR)$-Poisson boundary should be $\PP^1(\RR)$, however I am not aware of any proof of this fact.

See also \cite{Bab} section 1.7.4, for a similar example.

\section{From $\Gamma$-boundaries to  $G$-boundaries}\label{sect: construction G-bound}

\subsection{Construction of a $G$-action on a $\Gamma$-space}
Let $(X, \mathfrak{X},  \nu)$ be a $\Gamma$-measurable Lebesgue space equipped with a measure $\nu$ that is $\Gamma$-quasi invariant. Suppose that $\Gamma$ is contained in locally compact group $G$. We want to construct a sort of minimal class of functions on $X$, on which $G$ acts in such a way that the restriction to $\Gamma$ of this action coincides with the $\Gamma$-action.

Consider the product  space $(G\times X, \mathfrak{G}\times \mathfrak{X}, \rho\times \nu)$ and define the $\Gamma$-action on $G\times X$
\begin{equation}\label{eq-act-Gamma}
    \gamma   \act (g,x):=(g\gamma\inv, \gamma\cdot x).
\end{equation}

Let $\mathfrak{I}$ be the $\sigma$-algebra of $(\Gamma,\act)$-invariant functions of $G\times X$ that is the class of the functions $\phi$ such that $\rho(dg)\times\nu(dx)$-almost surely
$$\phi(g,x)=\phi(g\gamma\inv, \gamma x) \qquad \forall \gamma\in \Gamma.$$

The $\sigma$-algebra $\mathfrak{I}$ is complete because $\rho\times \nu$ is $(\Gamma,\act)$-quasi invariant and $\Gamma$ is countable. By  Rokhlin's correspondence, we have a partition $\eta$ of $G\times X$ associated to the $\sigma$-algebra $\mathfrak{I}$. Let $\widetilde{X} =G\times X/\eta$ be the quotient space. Then $L^\infty(G\times X,\mathfrak{I},\rho\times \nu)= L^\infty(\widetilde{X}, \widetilde{\rho\times \nu})$. As we wanted
$\widetilde{X}$ has a natural structure of $G$-space inherited by the left multiplication on the $G$ component
      $$g_0\eta(g,x)=\eta(g_0g,x)$$
      that coincide with l'action of $\Gamma$ on $X$ in the sense that
      $$\gamma\eta(e,x)=\eta(\gamma,x)=\eta(e,\gamma\cdot x).$$

In the next section ,using this measure theoretical construction, we will build the $G$-boundary on the $\Gamma$-boundary and prove that, in the case the $\act$-action has a fundamental domain, this fundamental domain is  the $G$-boundary.  Still,  I do not understand how to construct a geometrical model of this measure space in the case the $\Gamma$-action is "dense".

An interesting case is, for instance, when  $G$ acts on $X$ and this action coincides with the $\Gamma$-action. Then $L^\infty(X,\rho*\nu)$ embed isometrically in $L^\infty(G\times X,\mathfrak{I},\rho\times \nu)$. In fact if $\psi\in L^\infty(X,\rho*\nu)$
      then
      $$\phi_\psi(g,x):=\psi(gx)$$
      is clearly $\Gamma$-invariant and this embedding  is an isometry since
      $$\|\phi_\psi\|_\infty=\lim_{p\to\infty}\big(\int \phi_\psi(g,x)^p \rho(dg) \nu(dx)\big)^{1/p}=\lim_{p\to\infty}\big(\int \psi(y)^p \rho*\nu(dy)\big)^{1/p}=\|\psi\|_\infty.$$
\begin{question}
However it is not clear under which conditions this map is surjective, that is when the $\widetilde{X}$ coincide with $X$.

For instance, as a toy model, take $G=(\RR,+)$, $X=\RR$ and $\Gamma=\QQ$. For which measure $\nu$ does $\widetilde{X}=\RR$? This is true by if $\nu$ is a.c. with respect to the Lebesgue measure, but what happen for other measures?

What happen if  $G=SL_2(\RR)$ $X=\PP^1(\RR)$ and $\Gamma=SL_2(\ZZ(1/2))$ (or $\Gamma=SL_2(\QQ)$)?
\end{question}

\subsection{From  $\Gamma$-boundaries to  $G$-boundaries}

Suppose that the measure $\nu$ on $X$ is $\mu$-invariant.
For every bounded function $\phi$ in $L^\infty(G\times X,\rho\times \nu)$  define the Poisson transform:
$$\mathcal{P}_\nu: \phi \mapsto f_\phi(g)=\int\phi(g,x)d\nu(x)\mbox{for $\lambda(dg)$-almost all } g. $$

As we wanted,  if $\phi\in\mathfrak{I}$ then  $f_\phi$ is a bounded $\mu$-harmonic function on $L^\infty(G,\lambda)$. In fact
\begin{eqnarray*}
f_\phi(g)&=&\int\phi(g,x)\nu(dx)=\int\phi(g,\gamma\cdot x)\nu(dx)\mu(d\gamma)=\\
   & =&\int\phi(g\gamma ,x)\nu(dx)\mu(d\gamma)=\int f_\phi(g\gamma)\mu(d\gamma).
\end{eqnarray*}

The following proposition shows that all $G$-harmonic functions can be written in such a way
\begin{prop}\label{prop:GammatoG-PB}
   If $(X,\nu)$ is the Poisson boundary of $(\Gamma,\mu)$ then for every $\mu$-harmonic function on $G$ there exists a bounded function $\phi\in\mathfrak{I}$ such that $f=f_\phi$ in $L^\infty(G,\lambda)$.\\
      In this case $\mathcal{P}_\nu$ is an isometry form $L^\infty(G\times X,\mathfrak{I},\rho\times \nu)$ onto $H^\infty_\lambda(G)$. In other word $\widetilde{X}$ is the $G$-Poisson boundary.
\end{prop}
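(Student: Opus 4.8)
The plan is to route everything through the random-walk description of harmonic functions recalled above. Using that $f\mapsto Z_f$ is an isometry of $H^\infty_\lambda(G)$ onto the bounded measurable maps on $(G\times\Omega,\rho\times\PP)$ that are invariant under the skew transformation $S(g,\omega)=(g\omega_1,T\omega)$, with inverse $Z\mapsto f_Z(g)=\EE[Z(g,\omega)]$, it suffices to prove: for every $f\in H^\infty_\lambda(G)$ there is a bounded $\phi\in\mathfrak I$ with $Z_f(g,\omega)=\phi(g,\bnd(\omega))$ for $\rho\times\PP$-a.e.\ $(g,\omega)$. Indeed, put $\Phi(g,\omega)=(g,\bnd(\omega))$; then $\Phi$ is measure preserving from $(G\times\Omega,\rho\times\PP)$ onto $(G\times X,\rho\times\nu)$ and, using $\bnd(\omega)=\omega_1\cdot\bnd(T\omega)$, it intertwines $S$ with the $\act$-action of $\omega_1\inv$, so precomposition with $\Phi$ embeds $L^\infty(G\times X,\mathfrak I,\rho\times\nu)=L^\infty(\widetilde X)$ isometrically into the $S$-invariant functions and satisfies $\mathcal P_\nu\phi=f_{\phi\circ\Phi}$; the asserted descent property is precisely the statement that this embedding is onto, which makes $\mathcal P_\nu=\big(Z\mapsto f_Z\big)\circ\big(\phi\mapsto\phi\circ\Phi\big)$ an isometry of $L^\infty(\widetilde X)$ onto $H^\infty_\lambda(G)$ and, in particular, gives $f=f_\phi$ for the produced $\phi$.

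To produce $\phi$ I would argue coset by coset. Fix a Borel representative of $f$ and a $\lambda$-conull set $A$ on which $|f|\le\|f\|_\infty$ and $f(g)=\int_\Gamma f(g\gamma)d\mu(\gamma)$. Since $\mu$ is carried by the countable group $\Gamma$ and right translations preserve the right Haar measure $\lambda$, the set $G_0:=\bigcap_{\gamma\in\Gamma}A\gamma\inv$ is $\lambda$-conull and stable under right multiplication by $\Gamma$, and for each $g\in G_0$ the function $h_g(\gamma):=f(g\gamma)$ is a bounded $\Gamma$-harmonic function with $\|h_g\|_\infty\le\|f\|_\infty$. As $(X,\nu)$ is the $(\Gamma,\mu)$-Poisson boundary, there is a unique $\psi_g\in L^\infty(X,\nu)$ with $h_g(\gamma)=\int_X\psi_g(\gamma\cdot x)d\nu(x)$ for all $\gamma\in\Gamma$ and $\|\psi_g\|_\infty\le\|f\|_\infty$; moreover the boundary-map description of the $\Gamma$-boundary, applied to $h_g$ along the random walk started at $e$, gives $\lim_n h_g(r_n(\omega))=\psi_g(\bnd(\omega))$ for $\PP$-a.e.\ $\omega$. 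Because $g\,r_n(\omega)=g\cdot r_n(\omega)$ with $r_n(\omega)\in\Gamma$, the left-hand side equals $\lim_n f(g\,r_n(\omega))=Z_f(g,\omega)$, so
\[
Z_f(g,\omega)=\psi_g(\bnd(\omega))\qquad\text{for }\rho\times\PP\text{-a.e.\ }(g,\omega).
\]

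It then remains to verify that $\phi(g,x):=\psi_g(x)$ belongs to $\mathfrak I$. For $\gamma_0\in\Gamma$ and $g\in G_0$ we have $h_{g\gamma_0}(\gamma)=f(g\gamma_0\gamma)=h_g(\gamma_0\gamma)=\int_X\psi_g(\gamma_0\gamma\cdot x)d\nu(x)$, so uniqueness of the Poisson representation forces $\psi_{g\gamma_0}(\cdot)=\psi_g(\gamma_0\cdot)$ in $L^\infty(X,\nu)$, i.e.\ $\phi(g\gamma_0,x)=\phi(g,\gamma_0 x)$ for $\rho\times\nu$-a.e.\ $(g,x)$. Intersecting these countably many conull sets and substituting $x\mapsto\gamma_0\inv x$ — allowed because $\nu$ is $\Gamma$-quasi-invariant — gives $\phi(g,x)=\phi(g\gamma_0\inv,\gamma_0 x)$ for all $\gamma_0\in\Gamma$, $\rho\times\nu$-a.e., which is exactly membership in $\mathfrak I$. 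Finally the identification $L^\infty(\widetilde X)\cong H^\infty_\lambda(G)$ is $G$-equivariant for the left-multiplication action on the $G$-factor of $\widetilde X$, since $\mathcal P_\nu(g_0\cdot\phi)(g)=\int_X\phi(g_0\inv g,x)d\nu(x)=f_\phi(g_0\inv g)$; together with the uniqueness of the Poisson boundary as a $G$-space, this shows $\widetilde X$ is the $(G,\mu)$-Poisson boundary.

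The step I expect to be the real obstacle is the joint measurability of $\phi$ on $G\times X$: so far $\psi_g$ has been produced separately for each $g\in G_0$. I would circumvent this by defining $\phi$ not fiberwise but as a conditional expectation — fixing a regular conditional distribution for $\bnd$ on the Lebesgue space $(\Omega,\PP)$ and setting $\phi(g,x):=\EE[\,Z_f(g,\cdot)\mid\bnd=x\,]$, which is jointly measurable (since $Z_f$ is jointly measurable once representatives are fixed) and bounded by $\|f\|_\infty$; the identity above shows that for $\rho$-a.e.\ $g$ this $\phi(g,\cdot)$ agrees with $\psi_g$, so the invariance computation of the previous paragraph still applies. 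The remaining ingredients — the Fubini argument producing $G_0$ and the bookkeeping with $\Gamma$-quasi-invariance — are routine.
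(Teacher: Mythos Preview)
Your proof is correct and follows essentially the same route as the paper's: both pass to the martingale limit $Z_f(g,\omega)=\lim_n f(g\,r_n(\omega))$, recognise it as $\phi(g,\bnd(\omega))$ for some $\phi$, and then verify $\phi\in\mathfrak I$ and the isometry property. The differences are only in the bookkeeping: the paper obtains $\mathfrak I$-invariance by conditioning on $\{\omega_1=\gamma_1\}$ and using $\omega_1\cdot\bnd(T\omega)=\bnd(\omega)$, whereas you use uniqueness of the $\Gamma$-Poisson representation of $h_{g\gamma_0}$; and the paper checks the isometry by a direct $L^p$-norm computation with $p\to\infty$, whereas you get it for free by writing $\mathcal P_\nu$ as the composite of the two isometries $\phi\mapsto\phi\circ\Phi$ and $Z\mapsto f_Z$. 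Your explicit treatment of joint measurability via a regular conditional expectation is a point on which you are more careful than the paper, which simply asserts that $\phi(g,x)$ is $G\times X$-measurable because $\nu=\bnd^{-1}\PP$.
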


\begin{proof}
Let $\omega\in(\Omega,\PP)=(\Gamma^\NN,\mu^{\otimes \NN})$ and $ r_k=r_k(\omega)=\omega_1\cdots \omega_k$ be the right random walk on $\Gamma$ of law $\mu$. The processes $f(g r_k(\omega))$ is a bounded martingale on the space  $(G\times \Omega,\rho\times \PP)$ thus it converges almost surely.  If $\bnd: \Omega\to X$ is the boundary map
\begin{equation}\label{eq-lim-phi}
\lim_{n\to\infty} f(g r_k(\omega))=\phi(g,\bnd(\omega))
\end{equation}
$\rho\times\PP$-almost surely. Thus $\phi(g,\bnd(\omega))$ is $G\times\Omega$ measurable and, since $\nu=\bnd\inv\PP$,  the function $\phi(g, x)$ is $G\times X$-measurable.
Furthermore since $\Gamma$ is countable, for $\rho\times\PP$-almost all $(g,\omega)$
$$ \lim_{n\to\infty} f(g\gamma  r_k(\omega))=\phi(g\gamma,\bnd(\omega))\quad\mbox{for all $\gamma\in \Gamma$.}$$

Observe that $X$ being a $\mu$-boundary we have
$$\omega_1\bnd(T\omega)=\bnd(\omega)$$
$T$ being the shift on $\Omega$.
Take $\gamma_1$ in the support of $\mu$ then the event $ \gamma_1=\omega_1$ as positive measure and conditioned to this event
$$ \phi(g\gamma_1\inv,\gamma_1\bnd(T\omega))=\phi(g\gamma_1\inv,\bnd(\omega))=\lim_{n\to\infty} f(g\gamma_1\inv\gamma_1 r_n(T\omega))=\phi(g,\bnd(T\omega))$$
Since $T\omega$ is independent of $\omega_1$ and of same law as $\omega$ and that the support of $\mu$ generates $\Gamma$, we can conclude that $\phi\in \mathfrak{I}$.

Lets us finely check that the  Poisson transform is an isometry. In fact
      $$\|f_\phi\|_\infty^\rho=\lim_{p\to\infty}\left(\int |f_\phi(g)|^pd\rho(g)\right)^{1/p}
      \leq\lim_{p\to\infty}\left(\int \int|\phi(g,x)|^p d\nu(x) d\rho(g)\right)^{1/p}=\|\phi\|_\infty^{\rho\times\nu}$$
      On the other hand by bounded convergence theorem
       \begin{eqnarray*}
       \|\phi\|_p^{\rho\times\nu} &=& \left(\int \int|\phi(g,x)|^p d\nu(x) d\rho(g)\right)^{1/p}=\\
       &=&\left(\int \int|\lim_{n\to\infty} f(gr_n(\omega))|^p d\PP(\omega) d\rho(g)\right)^{1/p}=\\
       &=&\lim_{n\to\infty}\left(\int \int| f(gr_n(\omega))|^p  d\rho(g) d\PP(\omega)\right)^{1/p}\\
       &\leq& \left(\int ( \|f\|_\infty^\rho)^p d\PP(\omega) \right)^{1/p} = \|f\|_\infty^\rho
       \end{eqnarray*}
       since $\rho$ is $G$-quasi invariant.
\end{proof}

\subsection{$G$-Poisson boundary as $\Gamma$-ergodic diagonal components}

Another way to express the result of Proposition \ref{prop:GammatoG-PB} is to say that the $G$-Poisson boundary coincides with the space of ergodic components of  $\Gamma$ on $(G\times X)$  with respect to the action $  \act$ defined in (\ref{eq-act-Gamma}).

Observe that the action $  \act$  is, in reality, the standard left diagonal action of $\Gamma$ on $G\times X$:
  $$\gamma\stackrel{\mbox{d}}{\cdot}(g,x)=(\gamma g,\gamma\cdot x).$$
  In fact the two actions are conjugated by the map $\pi: (g,x)\mapsto (g\inv,x)$, that  is an isomorphism of the measure space of $(G\times X,\rho\times \nu)$ that  preserves the class of measure.  Thus the space $L^\infty(G\times X,\mathfrak{I}, \rho \times \nu)$ coincides (via $\pi$) with the space of the bounded functions of $(G\times X, \rho\times \nu)$ that projet on  $\Gamma\setminus(G\times X)$. In particular  the $G$-Poisson boundary is trivial if and only if the (diagonal) action of $\Gamma$ on $(G\times X,\rho\times \nu)$ is ergodic.

Conversely if the action of $\Gamma$ on $G\times X$ is "measurably discrete", that is there exists a fundamental domain $\Delta$, then is possible to  identify the $G$-Poisson boundary with this geometrical model:
\begin{cor}\label{cor-fD}
 Suppose there exists measurable fundamental domain $\Delta\in \mathfrak{G}\times\mathfrak{X}$ for the action $ \act$ of $\Gamma$ on $G\times X$ (or equivalently for the diagonal action) that is
  \begin{itemize}
    \item $\rho\times\nu((G\times X)\setminus \Gamma  \act \Delta)=0$
    \item $\rho\times\nu (\Delta\cap \bigcup_{\gamma\in\Gamma-\{e\}}\gamma   \act \Delta)=0$
   \end{itemize}
Let $\mathfrak{D}$ be the restriction of the $\sigma$-algebra $\mathfrak{G}\times\mathfrak{X}$ to $\Delta$.
Then $L^\infty(\Delta, \mathfrak{D},\rho\times\nu)$ is isometric to $L^\infty(G\times X,\mathfrak{I},\rho\times \nu)$ . The measurable space  $(\Delta, \mathfrak{D})$ with the induced $G$-action
$$g_0*\phi (g,x):=\sum_{\gamma\in \Gamma} \phi(g_0g\gamma\inv,\gamma\cdot x)1_{\Delta}(g_0g\gamma\inv,\gamma\cdot x) \mbox{ for all }\phi\in L^\infty(\Delta, \mathfrak{D},\rho\times\nu) $$
and the $\mu$-invariant measure  defined by
$$\widetilde{\nu}(\phi):=\sum_{\gamma\in \Gamma}\int \phi(\gamma\inv,\gamma\cdot x)1_{\Delta}(\gamma\inv,\gamma\cdot x) \nu(dx)$$
is the $G$-Poisson boundary.
\end{cor}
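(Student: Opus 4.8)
The plan is to transport the abstract description of the $G$-boundary given by Proposition~\ref{prop:GammatoG-PB} through the combinatorial isomorphism induced by a fundamental domain. Concretely, since $\Delta$ is a fundamental domain for the $\act$-action of $\Gamma$ on $(G\times X, \rho\times\nu)$, every $\Gamma$-orbit meets $\Delta$ in exactly one point (up to null sets), so the restriction map $\phi\mapsto \phi|_\Delta$ is a bijection between $\act$-invariant functions on $G\times X$ and arbitrary (bounded) functions on $\Delta$; its inverse sends $\psi$ on $\Delta$ to the function whose value at $(g,x)$ equals $\psi$ evaluated at the unique $\Gamma$-translate of $(g,x)$ lying in $\Delta$. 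First I would check this is an isometry of $L^\infty$ spaces: this is immediate because for a complete $\sigma$-algebra the essential sup of a $\mathfrak{I}$-measurable function equals the essential sup of its restriction to any set meeting each orbit, using that $\rho\times\nu$ is $(\Gamma,\act)$-quasi-invariant and $\Gamma$ is countable, so the orbit of a null set is null. Composing with $\mathcal{P}_\nu$ from Proposition~\ref{prop:GammatoG-PB}, which is an isometry from $L^\infty(G\times X,\mathfrak{I},\rho\times\nu)$ onto $H^\infty_\lambda(G)$, gives that $\psi\mapsto \mathcal{P}_\nu(\wt{\psi})$ is an isometry from $L^\infty(\Delta,\mathfrak{D},\rho\times\nu)$ onto $H^\infty_\lambda(G)$, where $\wt{\psi}$ denotes the $\act$-invariant extension.

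Next I would make explicit the transported $G$-action and the transported measure, which amounts to writing down, in coordinates on $\Delta$, the natural $G$-action $g_0\cdot\eta(g,x)=\eta(g_0g,x)$ on $\wt X$ described in the previous subsection. If $\psi$ lives on $\Delta$ and $\wt\psi$ is its $\act$-invariant extension, then $(g_0\cdot\psi)$ should be the restriction to $\Delta$ of $(g,x)\mapsto \wt\psi(g_0^{-1}g? ,x)$ --- here one must be careful with the left/right conventions, but the point is that after the left $G$-translation the pair $(g_0g,x)$ generally leaves $\Delta$, and we recover its representative by summing $\wt\psi(g_0g\gamma^{-1},\gamma\cdot x)$ against the indicator $1_\Delta(g_0g\gamma^{-1},\gamma\cdot x)$ over $\gamma\in\Gamma$: exactly one term survives for a.e.\ $(g,x)$, and $\wt\psi(g_0g\gamma^{-1},\gamma\cdot x)=\psi(g_0g\gamma^{-1},\gamma\cdot x)$ on that term since the point lies in $\Delta$. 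This is precisely the formula $g_0*\phi(g,x):=\sum_{\gamma\in\Gamma}\phi(g_0g\gamma^{-1},\gamma\cdot x)1_\Delta(g_0g\gamma^{-1},\gamma\cdot x)$ in the statement. The $\mu$-invariant measure $\wt\nu$ is then the pushforward of $\nu$ (sitting on $\{e\}\times X\subset G\times X$, or rather of $\rho\times\nu$ suitably) under the quotient-then-identify-with-$\Delta$ map; unwinding this identification gives the stated formula $\wt\nu(\phi)=\sum_{\gamma\in\Gamma}\int \phi(\gamma^{-1},\gamma\cdot x)1_\Delta(\gamma^{-1},\gamma\cdot x)\,\nu(dx)$, because the $\Gamma$-orbit of $\{e\}\times X$ is $\{(\gamma^{-1},\gamma\cdot x)\}$ and we keep the representative in $\Delta$.

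Finally I would verify directly that $(\Delta,\mathfrak{D},\wt\nu)$ with this action is a genuine measurable $G$-space with $\mu$-invariant measure, and that $\mathcal{P}_{\wt\nu}$ on $L^\infty(\Delta,\wt\nu)$ (in the sense of the earlier Poisson-transform discussion, i.e.\ $f_\psi(g)=\int\psi(g\cdot x)\,d\wt\nu(x)$) coincides with $\psi\mapsto\mathcal{P}_\nu(\wt\psi)$ --- this is a change-of-variables computation turning the sum-over-$\Gamma$ in the action and in $\wt\nu$ back into the integral $\int\wt\psi(g,x)\,d\nu(x)$ over $G\times X$, using the fundamental-domain property. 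Combined with the isometry onto $H^\infty_\lambda(G)$ established in the first step, this shows $(\Delta,\wt\nu)$ satisfies the defining property of the $(G,\mu)$-Poisson boundary, and uniqueness (quoted after the definition) finishes the proof. The main obstacle is bookkeeping rather than conceptual: keeping the left/right actions and the $(g,x)\mapsto(g^{-1},x)$ conjugation of the two subsections consistent, and checking carefully that "exactly one $\gamma$ survives" holds $\rho\times\nu$-almost everywhere (this is where both defining properties of $\Delta$, covering and essential disjointness, are used), so that the formulas define honest functions and an honest measure rather than merely formal expressions.
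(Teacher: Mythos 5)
Your proposal is correct and follows essentially the same route as the paper: both identify $L^\infty(\Delta,\mathfrak{D},\rho\times\nu)$ with $L^\infty(G\times X,\mathfrak{I},\rho\times\nu)$ through the fundamental-domain property (you via restriction and invariant extension of functions, the paper via saturation of sets $A\mapsto\Gamma\act A$, with injectivity from essential disjointness and surjectivity from $I=\Gamma(I\cap\Delta)$), and then transport the $G$-action and the measure, using the ``exactly one $\gamma$ survives'' observation, before invoking Proposition~\ref{prop:GammatoG-PB}. Your extra verification that the transported Poisson transform agrees, plus uniqueness of the boundary, is a harmless fleshing-out of what the paper leaves implicit.
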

\begin{proof}
The map $$A \mapsto \Gamma   \act A$$ induces an isometry of $L^\infty(\Delta, \mathfrak{D},\rho\times\nu)$ onto $L^\infty(G\times X,\mathfrak{I},\rho\times \nu).$\\
In fact \emph{if $A$ is a non trivial set of $\Delta$ then $\Gamma   \act A$ is a non trivial set of $\mathfrak{I}$.}
Clearly $\Gamma  \act A\in \mathfrak{I}$ and it has non null measure. Let $B\subset \Delta$ a nontrivial set  such that $\rho\times \nu(A\cap B)=0$. We claim that $\rho\times\nu(\Gamma  \act A\cap \Gamma   \act B)=0$; in fact the measure $\rho\times\nu$ being quasi-invariant of $\rho\times\nu(\gamma  \act A\cap\gamma  \act B)=0$ and if $\gamma_1\neq\gamma_2$
$$\rho\times\nu(\gamma_1 A\cap\gamma_2B)\leq \rho\times\nu (\Delta\cap \bigcup_{\gamma\in\Gamma-\{e\}}\gamma \Delta)=0$$
\emph{The isometry is surjective}. In fact let $I\in \mathfrak{I}$, we claim that $I=\Gamma(I\cap \Delta)$
In fact
$$\Gamma(I\cap \Delta)=\bigcup_\gamma \gamma I\cap\gamma \Delta=\bigcup_\gamma ( I\cap\gamma \Delta)=I\cap \Gamma\Delta$$

Observe that if $A\subseteq \Delta$ then
$$1_{\Gamma\act A}(g,x)=\sum_{\gamma\in \Gamma} 1_{A}(g\gamma\inv,\gamma\cdot x)$$
and the sum has only one term for $\rho\times\nu$-almost all $(g,x)$ .
It easily seen that the projection of $\nu$ on $\mathfrak{D}$ is
 $$ \widetilde{\nu}(A)= \sum_{\gamma\in \Gamma}\int 1_{A}(\gamma\inv,\gamma\cdot x) \nu(dx)=\nu(\Gamma  \act A).$$
\end{proof}

\section{$G$-Poisson boundary of Baumslag-Solitar group}
\begin{cor}\label{cor-PBofBS}
Let $p$ be a  prime number and consider the Baumslag-Solitar group
$$BS(1,p)=\left\langle \left[
                                     \begin{array}{cc}
                                       p^{\pm1} & \pm1 \\
                                       0 & 1 \\
                                     \end{array}
\right] \right\rangle.$$  Let $\mu$ be a irreducible measure on $BS(1,p)$ with first logarithmic moment on $\RR$ and $\Qp$. Suppose that
$$\phi_p=\int_\Gamma \log|a(\gamma)|_pd\mu(\gamma) <0 $$
where $\gamma= \left[
                                     \begin{array}{cc}
                                       a(\gamma) & b(\gamma) \\
                                       0 & 1 \\
                                     \end{array}
\right]$
that is the $BS(1,p)$-Poisson boundary is $X=\QQ_p$.  Let
$$\affO{p,\RR}=\left\{\left[
                                     \begin{array}{cc}
                                       p^{m} & b \\
                                       0 & 1 \\
                                     \end{array}
\right]| m\in\ZZ, b\in\RR\right\}= \RR\rtimes\ZZ$$ be the closure of $BS(1,p)$ in $\affR$.
 Then the $\affO{p,\RR}$-Poisson boundary is the $p$-solenoid :
$$\Delta=\{(g,x)\in \affR\times \QQ_p| a(g)=1; 0\leq b(g) <1; |x|_p\leq 1\}=[0,1)\times \ZZ_p,$$
equipped with the
$\affO{p,\RR}$-action on $\phi\in L^{infty}(\Delta,\rho\times\nu)$:
$$(b,p^m)
\cdot\phi(x_\infty,x_p)= \sum_{\beta \in \ZZ(1/p)} 1_\Delta\cdot\phi(p^{m}x_\infty+b-\beta,p^m x_p+\beta),$$
         and the invariant measure
$$\widetilde{\nu}(\phi):=\sum_{\beta \in \ZZ(1/p)\cap[0,1) }\int \phi(\beta , x-\beta)1_{\ZZ_p+\beta}(x) \nu(dx).$$
\end{cor}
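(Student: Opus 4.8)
The plan is to apply Corollary \ref{cor-fD} to the group $G=\affO{p,\RR}=\RR\rtimes\ZZ$, the dense countable subgroup $\Gamma=BS(1,p)=\ZZ(1/p)\rtimes\ZZ$, and the $\Gamma$-Poisson boundary $X=\QQ_p$ (which is the boundary by the hypothesis $\phi_p<0$, by \cite{Kai00} and \cite{BS}). The only real work is to exhibit an explicit measurable fundamental domain for the diagonal action of $\Gamma$ on $G\times X=\affO{p,\RR}\times\QQ_p$, endowed with the measure class $\rho\times\nu$ where $\rho$ is (say) Lebesgue on a bounded piece of $\affO{p,\RR}$ and $\nu$ is the stationary measure on $\QQ_p$, which is absolutely continuous with respect to the Haar measure of $\QQ_p$.

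First I would write the $\act$-action of $\gamma=(\beta,p^k)\in\Gamma$ on a point $(g,x)=((b,p^m),x)\in\affO{p,\RR}\times\QQ_p$ explicitly: using $\gamma\inv=(-p^{-k}\beta,p^{-k})$ and the affine multiplication, $\gamma\act(g,x)=(g\gamma\inv,\gamma\cdot x)=((b-p^m\,p^{-k}\beta,\ p^{m-k}),\ p^k x+\beta)$ — more precisely one should be careful whether the boundary action on $\QQ_p$ is $x\mapsto p^k x+\beta$ or its inverse; I would fix conventions so that the stated action formula in the corollary comes out right. The key observation is then that the $\ZZ$-factor of the affine part acts by translation on the exponent $m$, while the $\ZZ(1/p)$-factor acts by a "diagonal" translation that shifts the $\RR$-coordinate $b$ by $-p^m p^{-k}\beta$ and the $\QQ_p$-coordinate $x$ by $\beta$ (after absorbing the $p^k$). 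Since every real number is, modulo $\ZZ(1/p)$, uniquely represented in $[0,1)$ once one also uses up the $p$-adic carry — equivalently, $\RR\times\QQ_p$ modulo the diagonally embedded $\ZZ(1/p)$ is the solenoid $[0,1)\times\ZZ_p$ — the set $\Delta=\{a(g)=1,\ 0\le b(g)<1,\ |x|_p\le 1\}$ should be a fundamental domain. I would verify the two defining properties of Corollary \ref{cor-fD}: (i) $\Gamma\act\Delta$ is $\rho\times\nu$-conull, and (ii) $\Delta$ meets each nontrivial translate in a null set. For (i), given $((b,p^m),x)$ one first applies $(0,p^{-m})$ to normalize $a(g)$ to $1$ (at the cost of rescaling), then chooses $\beta\in\ZZ(1/p)$ so that $b-\beta\in[0,1)$ and simultaneously $x+\beta\in\ZZ_p$; such a $\beta$ exists and is unique because $\ZZ(1/p)$ embedded diagonally in $\RR\times\QQ_p$ is a lattice with fundamental domain exactly $[0,1)\times\ZZ_p$ (this is the standard adelic description of the $p$-solenoid). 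For (ii), if $(\beta,p^k)\act\Delta$ meets $\Delta$ in positive measure then $k=0$ (the exponent is preserved) and $\beta\in\ZZ(1/p)$ satisfies $\beta\in(-1,1)$ in $\RR$ and $\beta\in\ZZ_p-\ZZ_p=\ZZ_p$ in $\QQ_p$, forcing $\beta\in\ZZ\cap\ZZ_p=\ZZ$ with $|\beta|<1$, hence $\beta=0$.

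Once $\Delta$ is shown to be a fundamental domain, the $G$-action and the invariant measure $\widetilde\nu$ are just the transport of structure given by the general formulas in Corollary \ref{cor-fD}, specialized to this $G$ and $\Gamma$; writing $g_0=(b,p^m)$, $\gamma\inv=(-p^{-k}\beta,p^{-k})$, and collecting the indicator $1_\Delta$ gives exactly the displayed formula $(b,p^m)\cdot\phi(x_\infty,x_p)=\sum_{\beta\in\ZZ(1/p)}1_\Delta\cdot\phi(p^m x_\infty+b-\beta,\ p^m x_p+\beta)$, and similarly for $\widetilde\nu$, where the sum over $\ZZ(1/p)$ collapses to a sum over $\ZZ(1/p)\cap[0,1)$ because only those $\beta$ land the normalized point $(\beta,x-\beta)$ in $\Delta$. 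The identification $\Delta\cong[0,1)\times\ZZ_p$ as measure spaces is immediate from the definition of $\Delta$. I expect the main obstacle to be purely bookkeeping: getting the group-law conventions (left versus right action, $\gamma$ versus $\gamma\inv$ on the $\QQ_p$-boundary, the placement of the $p^m$ dilation factor) consistent throughout so that the two fundamental-domain conditions genuinely hold and the final action/measure formulas match those in Corollary \ref{cor-fD}; the underlying geometric fact — that the diagonal $\ZZ(1/p)$ is a lattice in $\RR\times\QQ_p$ with quotient the $p$-solenoid — is classical and causes no trouble.
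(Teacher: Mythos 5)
Your proposal is correct and takes essentially the same route as the paper: reduce everything to showing that $\Delta=[0,1)\times\ZZ_p$ is a measurable fundamental domain for the $\act$-action of $BS(1,p)$ on $\affO{p,\RR}\times\QQ_p$ and then transport the action and the measure via Corollary \ref{cor-fD}. The only difference is cosmetic: the paper exhibits the unique $\gamma=([b+\alpha(p^mx)]-\alpha(p^mx),p^m)$ carrying a given point into $\Delta$ explicitly, whereas you appeal to the equivalent classical fact that the diagonally embedded $\ZZ(1/p)$ is a lattice in $\RR\times\QQ_p$ with fundamental domain $[0,1)\times\ZZ_p$.
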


\begin{proof}
We just need to prove that $\Delta$ is a fundamental domain. In fact for any $x\in\QQ_p$ let $\alpha(x)\in \ZZ(1/p)$ such that $|x-\alpha(x)|_p\leq 1$. The choice of $\alpha$ is unique up to the sum with an integer. It easily checked that for every $(b,x)\in \RR\times Q_p$ the unique $k\in \ZZ(1/p)$ such $|x+k|_p\leq 1$ and $b-k\in [0,1)$ is $k=[b+\alpha(x)]-\alpha(x)$. Thus
$$\gamma\act ((b,p^m),x)\in \Delta \Leftrightarrow \gamma=([b+\alpha(p^mx)]-\alpha(p^mx),p^m)$$
\end{proof}

To illustrate how the pervious corollary can be used to  study the behaviors of harmonic functions on $BS(1,p)$, consider for example
$$\phi(g,x)=1_{[0,1)\times\{1\}}(g)1_{p\ZZ_p}(x)$$
and the associated harmonic function:
$$f(b,p^m)=\int \sum_{\beta \in \ZZ(1/p)} 1_{[0,1)}(b-\beta) 1_{p\ZZ_p}(p^m x+\beta)\nu(dx). $$
Then we have
\begin{itemize}
  \item $f$ is periodic of periode $p$ on the $b$ coordinate
  $$f(pk+ b,p^m)=\int \sum_{\beta \in \ZZ(1/p)} 1_{[0,1)}(b-\beta) 1_{p\ZZ_p}(p^m x+\beta+pk)\nu(dx)=f(b,p^m) $$
  \item $\lim_{m\to+\infty} f(b,p^m)=1$ if $b\in[0,1)+p\ZZ$. In fact $\|f\|_\infty=1$ and $b\in [0,1)$
  $$f(b,p^m)\geq \int  1_{[0,1)}(b) 1_{p\ZZ_p}(p^m x)\nu(dx)=\nu(p^{1-m}\ZZ_p)\to 1$$
  when $m\to+\infty$
  \item $\lim_{m\to+\infty} f(b,p^m)=0$ if $b\not\in[0,1)+p\ZZ$ in fact
  \begin{eqnarray*}
    f(b,p^m) &\leq&  \int \sum_{\beta \in \ZZ(1/p)} 1_{[0,1)}(b-\beta) 1_{p\ZZ_p}(p^m x+\beta)1_{p^{1-m}\ZZ_p}(x)\nu(dx) +\\& &\qquad + (1-\nu(p^{1-m}\ZZ_p)) \\
     &\leq &  \sum_{\beta \in \ZZ(1/p)} 1_{[0,1)}(b-\beta) 1_{p\ZZ_p}(\beta) + (1-\nu(p^{1-m}\ZZ_p))\\
     &=&  \sum_{k \in \ZZ} 1_{[0,1)}(b-pk) + (1-\nu(p^{1-m}\ZZ_p))\\
     &=&   1_{[0,1)+p\ZZ}(b) + (1-\nu(p^{1-m}\ZZ_p)).
  \end{eqnarray*}
  \end{itemize}

\end{document}